\providecommand{\U}[1]{\protect\rule{.1in}{.1in}}
\newtheorem{theorem}{Theorem}
\newtheorem{lemma}[theorem]{Lemma}
\newtheorem{remark}[theorem]{Remark}
\begin{document}

\title{Conformal geometry and special holonomy}
\author{Siu-Cheong Lau and Naichung Conan Leung}

\begin{abstract}
A theorem of Lawson and Simons states that the only stable
minimal submanifolds in $\mathbb{CP}^{n}$ are complex submanifolds.  We generalize their result to the cases of $\mathbb{HP}^{n}$ and $\mathbb{OP}^{2}$.  Our approach gives a unified viewpoint towards conformal and projective geometries.
\end{abstract}

\maketitle

\section{Introduction}

Riemannian holonomy group $hol\left(M,g\right)$ measures the richness of
algebraic structure on a Riemannian manifold\footnote{All manifolds are
connected compact oriented smooth manifolds.}. For a generic metric, the
holonomy group equals $SO\left(  m\right)  $ with $m=\dim_{\mathbb{R}}M$.
Manifolds with special holonomy include K\"{a}hler manifolds with $hol\left(
M,g\right)  =U\left(  n\right)  $ and Calabi-Yau manifolds with \newline%
$hol\left(  M,g\right)  =SU\left(  n\right)  $ where $m=2n$. They play very
important roles in geometry and mathematical physics such as string theory and
M-theory. Riemannian holonomy groups were completely classified by Berger
\cite{Berger_holonomy} and all these geometries have been given a unified
description in terms of real, complex, quaternionic and octonionic structures
(that is, normed division algebras) and orientability in \cite{boss_Huang} for
symmetric spaces and \cite{boss_div_alg} for non-symmetric ones.

Another important branch in Riemannian geometry is the conformal geometry
where one allows the Riemannian metric to be scaled by a conformal factor,
i.e. $g\sim e^{u}g$ for any function $u$. In this article, we explain how one
integrates conformal geometry with real, complex, quaternionic and octonionic
geometries. In particular we give a uniform proof to the following theorem on
rigidity of calibrated cycles in projective spaces, which is a generalization
of the results of Lawson and Simons from conformal and complex geometries to
quaternionic and octonionic geometries. After we have discovered this, we were
informed that this result has been proved earlier by \cite{Ohnita_stable}.
\ We hope that our approach from Jordan algebra provides a unified viewpoint
on all these seemingly different kinds of geometries.

\begin{quotation}
\noindent\textbf{Main Theorem:} In $\mathbb{AP}^{n}$, where $\mathbb{A\in
}\left\{  \mathbb{R},\mathbb{C},\mathbb{H},\mathbb{O},\mathbb{R}^{m}\right\}
$, any stable minimal submanifold $S$ (or more generally rectifiable current)
must be complex, by which we means $T_{x}S$ is invariant under all the linear
complex structures at $x$ for almost every $x\in S$.
\end{quotation}

\begin{remark}
There is an $\mathbb{S}^{2}$-family of linear complex structures at every
point of $\mathbb{H}\mathbb{P}^{n}$, and also an $\mathbb{S}^{6}$-family
of linear complex structures at each point of $\mathbb{O}\mathbb{P}^{2}$.
\end{remark}

\begin{remark}
When $\mathbb{A=O}$, we only allow $n\leq2$; When $\mathbb{A=R}^{m}$, only
$n\mathbb{=}1$ is admitted, and $\mathbb{R}^{m}\mathbb{P}^{1}=\mathbb{S}^{m}$.
\ We will explain this notation in the next section.
\end{remark}

\section{$\mathbb{R}$, $\mathbb{C}$, $\mathbb{H}$, $\mathbb{O}$ and conformal
geometry}

In \cite{boss_div_alg} the second author gave a unified description of
geometries of each holonomy group by first defining the group $G_{\mathbb{A}
}\left(  n\right)  $ of twisted automorphisms of $\mathbb{A}^{n}$ and its
subgroup $H_{\mathbb{A}}\left(  n\right)  $ of special twisted automorphisms,
where \newline$\mathbb{A\in}\left\{  \mathbb{R},\mathbb{C},\mathbb{H}
,\mathbb{O}\right\}  $ is a normed division algebra and $n$ equals one when
$\mathbb{A}=\mathbb{O}$. They are given explicitly in the following table:

\vspace{3mm}%

\begin{tabular}
[c]{|r||c|c|c|c|}\hline
$\mathbb{A}$ & $\mathbb{R}$ & $\mathbb{C}$ & $\mathbb{H}$ & $\mathbb{O}
$\\\hline
$G_{\mathbb{A}}\left(  n\right)  $ & $O\left(  n\right)  $ & $U\left(
n\right)  $ & $Sp\left(  n\right)  Sp\left(  1\right)  $ & $\mathrm{Spin}
\left(  7\right)  $\\\hline
$H_{\mathbb{A}}\left(  n\right)  $ & $SO\left(  n\right)  $ & $SU\left(
n\right)  $ & $Sp\left(  n\right)  $ & $G_{2}$\\\hline
\end{tabular}

\vspace{3mm}

Their corresponding geometries are as follows.

\vspace{3mm}%

\begin{tabular}
[c]{|r||c|c|c|c|}\hline
$\mathbb{A}$ & $\mathbb{R}$ & $\mathbb{C}$ & $\mathbb{H}$ & $\mathbb{O}
$\\\hline
$G_{\mathbb{A}}\left(  n\right)  $ & Riemannian & K\"{a}hler &
Quaternionic-K\"{a}hler & $\mathrm{Spin}\left(  7\right)  $\\\hline
$H_{\mathbb{A}}\left(  n\right)  $ & Volume & Calabi-Yau & Hyperk\"{a}hler &
$G_{2}$\\\hline
\end{tabular}

\vspace{3mm}

Due to the nonassociativity of the octonion, there are obvious difficulties to
define its modules $\mathbb{O}^{n}$ and their automorphism groups
$H_{\mathbb{O}}\left(  n\right)  $. Nonetheless, for $n\leq3$, this problem
can be resolved by considering the space of self-adjoint operators, leading to
the notion of Jordan algebra which we shall describe below.

On $\mathbb{R}^{n}$, the space of self-adjoint operators is simply the space
of symmetric $n\times n$ matrices, denoted by $S_{n}\left(  \mathbb{R}\right)
$. The symmetrization of ordinary matrix multiplication
\[
A\circ B=\left(  AB+BA\right)  /2
\]
makes $S_{n}\left(  \mathbb{R}\right)  $ into a formally real Jordan algebra.
Namely it is an algebra over $\mathbb{R}$ whose multiplication $\circ$ is
commutative and power associative (that is, $(a\circ a)\circ a=a\circ(a\circ
a)$), together with
\[
a_{1}\circ a_{1}+\ldots+a_{n}\circ a_{n}=0\ \Rightarrow\ a_{1}=\ldots
=a_{n}=0\text{.}
\]

The same product also makes the space $S_{n}\left(  \mathbb{A}\right)  $ of
Hermitian symmetric matrices with entries in $\mathbb{A\in}\left\{
\mathbb{R},\mathbb{C},\mathbb{H}\right\}  $ into a Jordan algebra. When $n=3$,
an analog of the product can still be defined for $\mathbb{A}=\mathbb{O}$,
making $S_{3}\left(  \mathbb{O}\right)  $ into an \textit{exceptional Jordan
algebra} (see e.g. \cite{Baez_octonions}) even though $\mathbb{O}$ lacks of associativity.

Inside $S_{n}\left(  \mathbb{A}\right)  $ we may collect all rank one
projections, which are matrices $p$ with $p\circ p=p$ and $\mathrm{tr}\,p=1$,
to form the projective space $\mathbb{AP}^{n-1}$. For instance, while the
module $\mathbb{O}^{3}$ does not exist, the concept of octonion lines
in $\mathbb{O}^{3}$ can be replaced by rank one
projection operators in $S_{3}\left(  \mathbb{O}\right)$, and the space of them forms the \textit{octonion
projective plane }$\mathbb{OP}^{2}$, which can be identified as the symmetric
space $F_{4}/Spin\left(  9\right)  $.

Since $S_{n}\left(  \mathbb{A}\right)  $ and $\mathbb{AP}^{n-1}$ are spaces of
self-adjoint operators on $\mathbb{A}^{n}$, they should share the same
automorphism group $H_{\mathbb{A}}\left(  n\right)  $ as $\mathbb{A}^{n}$.
\ This is indeed true in the classical cases when $\mathbb{A}\in\left\{
\mathbb{R},\mathbb{C},\mathbb{H}\right\}  $ and continues to have such an
interpretation in the exceptional case $\mathbb{A}=\mathbb{O}$. The following
gives a complete list of simple formally real Jordan algebras
\cite{Jordan_alg} and their automorphism groups (The center has removed for simplicity):

\vspace{3mm}%

\begin{tabular}
[c]{|r||c|c|c|c|c|}\hline
$\mathbb{A}$ & $\mathbb{R}$ & $\mathbb{C}$ & $\mathbb{H}$ & $\mathbb{O}$ &
$\mathbb{R}^{m}$\\\hline
$S_{n}\left(  \mathbb{A}\right)  $ & $S_{n}\left(  \mathbb{R}\right)  $ &
$S_{n}\left(  \mathbb{C}\right)  $ & $S_{n}\left(  \mathbb{H}\right)  $ &
$S_{3}\left(  \mathbb{O}\right)  $ & $S_{2}\left(  \mathbb{R}^{m}\right)
\simeq\mathbb{R}^{m}\oplus\mathbb{R}^{1,1}$\\\hline
$\mathbb{AP}^{n-1}$ & $\mathbb{RP}^{n-1}$ & $\mathbb{CP}^{n-1}$ &
$\mathbb{HP}^{n-1}$ & $\mathbb{OP}^{2}$ & $\mathbb{AP}^{1}=\mathbb{S}^{m}
$\\\hline
$H_{\mathbb{A}}\left(  n\right)  $ & $SO\left(  n\right)  $ & $SU\left(
n\right)  $ & $Sp\left(  n\right)  $ & $F_{4}$ & $SO\left(  m+1\right)
$\\\hline
\end{tabular}

\vspace{3mm}

Amazingly there is one more item in the list of Jordan algebras besides those
coming from normed division algebras, namely the \textit{spin factor}
$S_{2}\left(  \mathbb{R}^{m}\right)  \simeq\mathbb{R}^{m}\oplus\mathbb{R}%
^{1,1}$. It consists of $2\times2$ matrices of the form
\[
\left(
\begin{array}
[c]{cc}%
a-b & v\\
v & a+b
\end{array}
\right)  \leftrightarrow\left(
\begin{array}
[c]{c}%
v\\
b\\
a
\end{array}
\right)
\]
where $v\in\mathbb{R}^{m}$ and $a,b\in\mathbb{R}$, and we set $v\cdot
w=v^{t}w$ for $v,w\in\mathbb{R}^{m}$ to carry out matrix multiplication. The
embedded projective space is\newline%
\[
\left\{  \left(
\begin{array}
[c]{c}%
v\\
b\\
\frac{1}{2}%
\end{array}
\right)  :\left\Vert v\right\Vert ^{2}+b^{2}=\frac{1}{4}\right\}
\cong\mathbb{S}^{m}\newline\text{.}%
\]

Notice that the automorphism group $SO\left(  m+1\right)  $ of $S_{2}\left(
\mathbb{R}^{m}\right)  $ is also the isometry group of $\mathbb{S}^{m}$, and
it is contained as a maximal compact subgroup in the non-compact group
$\mathrm{Conf}(\mathbb{S}^{m})=SO\left(  m+1,1\right)  $. A natural question
arises: For $\mathbb{A\in}\left\{  \mathbb{R},\mathbb{C},\mathbb{H}%
,\mathbb{O}\right\}  $, is there a symmetry group of $\mathbb{AP}^{n-1}$ which
gives an analog to the conformal symmetry $SO\left(  m+1,1\right)  $ of
$\mathbb{S}^{m}$?

To answer this question, one identifies $\mathbb{S}^{m}$ as the conformal
boundary of the hyperbolic ball
\[
B^{m+1}:=\{M\in S_{2}\left(  \mathbb{R}^{m}\right)  :\mathrm{\det}M=1\} \cong
SO(m+1,1)/SO(m+1)
\]
on which $SO\left(  m+1,1\right)  $ acts as isometries. Under this
identification, one has $\mathrm{Conf}(\mathbb{S}^{m})\cong\mathrm{Isom}
(B^{m+1})=SO\left(  m+1,1\right)  $ which preserves collinearity in the sense
that $\mathrm{Conf}(\mathbb{S}^{m})$ maps circles to circles in $\mathbb{S}
^{m}$.

Now for $\mathbb{A}\in\left\{  \mathbb{R},\mathbb{C},\mathbb{H}\right\}  $, if
we collect the symmetries of $\mathbb{AP}^{n-1}$ which is linear but not
necessarily isometries, we obtain the group $SL\left(  n,\mathbb{A}\right)  $
\cite{Salzmann_proj_planes}. Analogously $\mathbb{AP}^{n-1}$ can be identified
as a part of the conformal boundary of $\{M\in S_{n}\left(  \mathbb{A}\right)
:\mathrm{\det}M=1\}\cong$ $SL\left(  n,\mathbb{A}\right)  /SU(n,\mathbb{A)}$
on which $SL\left(  n,\mathbb{A}\right)  ~$acts as isometries. We get the
answer for $\mathbb{A}\in\left\{  \mathbb{R},\mathbb{C},\mathbb{H}\right\}  $:
$SL\left(  n,\mathbb{A}\right)  $ can be regarded as the \textit{conformal
symmetry }of $\mathbb{AP}^{n-1}$, which plays the same role as $SO\left(
m+1,1\right)  $ acting on $\mathbb{S}^{m}$. In general, let's denote these
non-compact symmetry groups as $N_{\mathbb{A}}\left(  n\right)  $ which are
listed below. Notice that $H_{\mathbb{A}}\left(  n\right)  $ sits inside
$N_{\mathbb{A}}\left(  n\right)  $ as a maximal compact subgroup, and
$N_{\mathbb{A}}\left(  n\right)  /H_{\mathbb{A}}\left(  n\right)  $ can be
identified with the space of symmetric matrices with determinant one.

\vspace{3mm}%

\begin{tabular}
[c]{|r||c|c|c|c|c|}\hline
$\mathbb{A}$ & $\mathbb{R}$ & $\mathbb{C}$ & $\mathbb{H}$ & $\mathbb{O}$ &
$\mathbb{R}^{m}$\\\hline
$H_{\mathbb{A}}\left(  n\right)  $ & $SO\left(  n\right)  $ & $SU\left(
n\right)  $ & $Sp\left(  n\right)  $ & $F_{4}$ & $SO\left(  m+1\right)
$\\\hline
$N_{\mathbb{A}}\left(  n\right)  $ & $SL\left(  n,\mathbb{R}\right)  $ &
$SL\left(  n,\mathbb{C}\right)  $ & $SL\left(  n,\mathbb{H}\right)  $ &
$E_{6}^{-26}$ & $SO\left(  m+1,1\right)  $\\\hline
\end{tabular}

\vspace{3mm}

We may observe that when $m=1,2,4$ and $8$, $N_{\mathbb{R}^{m}}\left(
2\right)  =SL\left(  2,\mathbb{A}\right)  $ with $\mathbb{A}$ being real,
complex, quaternion and octonion respectively. Hence, $sl\left(
2,\mathbb{R}\right)  =so\left(  2,1\right)  $, $sl\left(  2,\mathbb{C}\right)
=so\left(  3,1\right)  $, $sl\left(  2,\mathbb{H}\right)  =so\left(
5,1\right)  $, $sl\left(  2,\mathbb{O}\right)  =so\left(  9,1\right)  $. In
general we have $sl\left(  2,\mathbb{A}\right)  =so\left(  \mathbb{A}
\oplus\mathbb{R}^{1,1}\right)  $ \cite{Baez_octonions}.

The above point of view integrates conformal geometry with real, complex,
quaternionic and octonionic geometries. In the next section we will illustrate
this viewpoint by studying the variation of volume of cycles under the
conformal symmetry $N_{\mathbb{A}}\left(  n\right)  $ of $\mathbb{AP}^{n-1}$
in a unified manner.

\begin{remark}
In \cite{Atiyah_Proj}, Atiyah and Berndt studied the complexified version of
$\mathbb{AP}^{n-1}$ with $\mathbb{A\in}\left\{  \mathbb{R},\mathbb{C}%
,\mathbb{H},\mathbb{O}\right\}  $. \ We can extend these descriptions to
$\mathbb{A=R}^{m}$ as in the following table:

\vspace{3mm}

{\small
\noindent\begin{tabular}{|r||c|c|c|c|c|}\hline
$\mathbb{A}$ & $\mathbb{R}$ & $\mathbb{C}$ & $\mathbb{H}$ & $\mathbb{O}$ &
$\mathbb{R}^{m}$\\\hline
$\left(  \mathbb{A}\otimes\mathbb{C}\right)  \mathbb{P}^{n-1}$ &
$\mathbb{CP}^{n-1}$ & $\left(  \mathbb{CP}^{n-1}\right)  ^{2}$ &
$Gr_{\mathbb{C}}\left(  2,2n-2\right)  $ & $\frac{E_{6}}{Spin\left(
10\right)  U\left(  1\right)  }$ & $\frac{O\left(  m+2\right)  }{O\left(
m\right)  O\left(  2\right)  }$\\\hline
$H_{\mathbb{A\otimes C}}\left(  n\right)  $ & $SU\left(  n\right)  $ &
$SU\left(  n\right)  ^{2}$ & $SU\left(  2n\right)  $ & $E_{6}$ & $SO\left(
m+2\right)  $\\\hline
$N_{\mathbb{A}\otimes\mathbb{C}}\left(  n\right)  $ & $Sp\left(
2n,\mathbb{R}\right)  $ & $SU\left(  n,n\right)  $ & $O^{\ast}\left(
4n\right)  $ & $E_{7}^{-25}$ & $SO\left(  m+2,2\right)  $\\\hline
\end{tabular}
}

\vspace{3mm}

Notice that the maximal compact subgroup of $N_{\mathbb{A}\otimes\mathbb{C}%
}\left(  n\right)  $ is the product of $H_{\mathbb{A\otimes C}}\left(
n\right)  $ with $U(1)$. \ Furthermore,
\[
\frac{N_{\mathbb{A}\otimes\mathbb{C}}\left(  n\right)  }{H_{\mathbb{A\otimes
C}}\left(  n\right)  U(1)}=S_{n}^{+}\left(  \mathbb{A}\right)  +iS_{n}%
(\mathbb{A)}%
\]
is a tube domain (see for example \cite{Gross_tube}). \ This gives a complete
list of tube domains.

They also have a quaternionic analog:

\vspace{3mm}

{\small
\noindent\begin{tabular}
[c]{|r||c|c|c|c|c|}\hline
$\mathbb{A}$ & $\mathbb{R}$ & $\mathbb{C}$ & $\mathbb{H}$ & $\mathbb{O}$ &
$\mathbb{R}^{m}$\\\hline
$\left(  \mathbb{A}\otimes\mathbb{H}\right)  \mathbb{P}^{n-1}$ &
$\mathbb{HP}^{n-1}$ & $Gr_{\mathbb{C}}(2,2n-2)$ & $Gr_{\mathbb{R}}(4,4n-4)$ &
$\frac{E_{7}}{\mathrm{Spin}(12)O(4)}$ & $\frac{O(m+4)}{O(m)O(4)}$\\\hline
$H_{\mathbb{A\otimes H}}\left(  n\right)  $ & $Sp(n)$ & $SU(2n)$ & $SO(4n)$ &
$E_{7}$ & $SO(m+4)$\\\hline
$N_{\mathbb{A}\otimes\mathbb{H}}\left(  n\right)  $ & $Sp(n,1)$ & $SU(2n,1)$ &
$SO(4n,4)$ & $E_{8}^{-24}$ & $SO(m+4,4)$\\\hline
\end{tabular}
}
\end{remark}

\section{Cycles under conformal symmetries}

In the last section, we regard $N_{\mathbb{A}}\left(  n+1\right)  $ as the
conformal symmetry group of $\mathbb{AP}^{n}$. Its Lie algebra%

\[
\mathfrak{n}_{\mathbb{A}}\left(  n+1\right)  =\mathfrak{h}_{\mathbb{A}}\left(
n+1\right)  \oplus S_{n+1}^{\prime}\left(  \mathbb{A}\right)
\]
induces vector fields which acts infinitestimally on $\mathbb{AP}^{n}$. Here
the Lie algebra $\mathfrak{h}_{\mathbb{A}}\left(  n+1\right)  $ of
$H_{\mathbb{A}}\left(  n+1\right)  $ induces Killing vector fields, and
$S_{n+1}^{\prime}\left(  \mathbb{A}\right)  $ consists of trace-free symmetric
matrices, which can be regarded as constant vector fields in $S_{n+1}^{\prime
}\left(  \mathbb{A}\right)  $, projecting to conformal vector fields on
$\mathbb{AP}^{n}\subset S_{n+1}^{\prime}(\mathbb{A})$. We are adopting the metric%

\[
\left\langle A,B\right\rangle :=2\,\mathrm{Re}(\mathrm{tr}\,AB)=2\,\mathrm{Re}
(\mathrm{tr}\,A\circ B)
\]
on $S_{n+1}^{\prime}(\mathbb{A})$ which induces the standard metric on
$\mathbb{AP}^{n}$.

We would like to compute the average second variation of the volume of a cycle
in $\mathbb{AP}^{n}$ under the action of $\mathfrak{n}_{\mathbb{A}}\left(
n+1\right)  $. First, Let us quickly review the terminology and set up some notations.

\subsection{Terminology and notations}

For a global vector field $V$ on a Riemannian manifold $M$, the second
variation $\mathcal{Q}_{S}(V)$ of the volume $\mathbf{M}$ of a rectifiable
current $S$ under $V$ is defined as%

\[
\mathcal{Q}_{S}(V):=\left.  \frac{\mathrm{d}^{2}}{\mathrm{d}t^{2}}\right\vert
_{t=0}\mathbf{M}((\phi_{t})_{\ast}S)=\int_{M}\,\left.  \frac{\mathrm{d}^{2}
}{\mathrm{d}t^{2}}\right\vert _{t=0}||(\phi_{t})_{\ast}S_{x}||\mathrm{d}
\nu_{S}(x)
\]
where $\phi_{t}$ is the flow induced by $V$, $S_{x}$ denotes the unit simple
vector representing the oriented tangent space of $S$ at $x$, and $\nu_{S}$
denotes the Borel measure associated with $S$. $S$ is said to be stable if
$\mathcal{Q}_{S}(V)\leq0$ for all vector fields $V$ on $M$. We will denote the
integrand $\left.  \frac{\mathrm{d}^{2}}{\mathrm{d}t^{2}}\right\vert
_{t=0}||(\phi_{t})_{\ast}\xi||$ by $\mathcal{Q}_{\xi}\left(  V\right)  $, the
second variation of an oriented orthonormal $p$-frame $\xi$ under $V$. One has
the following second variation formula for a gradient vector field $V$
\cite{Lawson_stable}:%

\begin{align}
\label{sec_var}\mathcal{Q}_{\xi}\left(  V\right)   &  =\left\langle
\mathcal{A}_{V,V}\xi,\xi\right\rangle +2\Vert\mathcal{A}_{V}\xi\Vert
^{2}-(\left\langle \mathcal{A}_{V}\xi,\xi\right\rangle )^{2}\nonumber\\
&  =\left(  \sum_{j=1}^{p}\left\langle \mathcal{A}_{V}e_{j},e_{j}\right\rangle
\right)  ^{2}+2\sum_{j=1}^{p}{\sum\limits_{k=1}^{q}}\left(  \left\langle
\mathcal{A}_{V}e_{j},n_{k}\right\rangle \right)  ^{2}+\sum_{j=1}
^{p}\left\langle \mathcal{A}_{V,V}e_{j},e_{j}\right\rangle
\end{align}
where $\xi=e_{1}\wedge\ldots\wedge e_{p}$, which is extended to an orthonormal
basis \newline$\{e_{1},\ldots,e_{p},n_{1},\ldots,n_{q}\}$ of $TM$. Here for
any smooth vector fields $V$ and $W$, $\mathcal{A}_{V}(u)$, $\mathcal{A}
_{V,W}$ are endomorphisms of $TM$ defined by
\begin{align}
\mathcal{A}_{V}X:=  &  \nabla_{X}V;\nonumber\label{D^2}\\
\mathcal{A}_{V,W}X:=  &  (\nabla_{V}\mathcal{A}_{W})X=\nabla_{V}\nabla
_{\tilde{X}}W-\nabla_{\nabla_{V}\tilde{X}}W
\end{align}
where $\nabla$ is the Levi-Civita connection, $\tilde{X}$ is a smooth local
extension of $X\in TM$. An endomorphism $L$ of $TM$ is extended to operate on
$\bigwedge^{p}TM$ by Leibniz rule:
\[
L(e_{1}\wedge\ldots\wedge e_{p})=\sum_{j=1}^{p}e_{1}\wedge\ldots\wedge
Le_{j}\wedge\ldots\wedge e_{p}\text{.}
\]

From the above second variation formula, we see that $\mathcal{Q}_{\xi}$, and
hence $\mathcal{Q}_{S}$, is a quadratic form on the space of smooth vector
fields on $M$, and we may restrict it to a finite-dimensional subspace $F$ of
vector fields and take the trace $(\mathrm{tr}\,\mathcal{Q}_{\xi}|_{F}
)=\sum\mathcal{Q}_{\xi}(V)$, where $V$ runs through an orthonormal basis of
$F$.

\subsection{Main theorem}

Coming back to our situation $M=\mathbb{AP}^{n}$, since vector fields induced
by $\mathfrak{h}_{\mathbb{A}}\left(  n+1\right)  $ preserve metric and does
not contribute to the second variation, we have
\[
\mathrm{tr}\,\mathcal{Q}_{\xi}|_{\mathfrak{n}_{\mathbb{A}}\left(  n+1\right)
}=\mathrm{tr}\,\mathcal{Q} _{\xi}|_{S_{n+1}^{\prime}\left(  \mathbb{A}\right)
}
\]
and so we may concentrate on $F=S_{n+1}^{\prime}\left(  \mathbb{A}\right)  $.

Moreover, notice that $\mathbb{AP}^{n}$ is an orbit of the group
$H_{\mathbb{A}}\left(  n+1\right)  $ acting on $S_{n+1}^{\prime}\left(
\mathbb{A}\right)  $. This symmetry helps to reduce a lot of calculations, as
illustrated by the following lemma:

\begin{lemma}
\label{inv of secvar under G} Let $G$ act isometrically on an inner product
space $\mathbb{V}$, and $M\subset\mathbb{V}$ be a $G$-invariant submanifold.
The projection of each $u\in\mathbb{V}$ gives a vector field $V_{u}$ on $M$,
and the space of all these vector fields is denoted by $F$. Then
\[
\mathrm{tr}\,\mathcal{Q}_{\xi}|_{F}=\mathrm{tr}\,\mathcal{Q}_{g\cdot\xi}|_{F}
\]
for all $g\in G$.

\begin{proof}
Since the metric on $M$ is $G$-invariant, the Levi-Civita connection $\nabla$
is $G$-equivariant, that is,
\[
\nabla_{g_{\ast}\cdot X}(g_{\ast}\cdot V)=g_{\ast}\cdot(\nabla_{X}V)\text{.}
\]
Hence one has
\[
\mathcal{A}_{V}(g\cdot\xi)=g\cdot(\mathcal{A}_{g_{\ast}^{-1}V}\cdot
\xi);\,\mathcal{A}_{V,W}(g\cdot\xi)=g\cdot(\mathcal{A}_{g_{\ast}
^{-1}V,\,g_{\ast}^{-1}W}\cdot\xi)\text{.}
\]
Applying to the second variation formula, we get
\[
\mathcal{Q}_{g\cdot\xi}(V_{u})=\mathcal{Q}_{\xi}(g_{\ast}^{-1}V_{u}
)=\mathcal{Q}_{\xi}(V_{g_{\ast}^{-1}u})
\]
where the last equality is due to $G$-invariance of metric. And so
\[
\mathrm{tr}\,\mathcal{Q}_{\eta}=\sum_{u}\mathcal{Q}_{\eta}(u)=\sum
_{u}\mathcal{Q}_{\xi}(g_{\ast}^{-1}u)=\mathrm{tr}\,\mathcal{Q}_{\xi}
\]
where $u$, and hence $g_{\ast}^{-1}u$, runs through an orthonormal basis of
$\mathbb{V}$. The last equality follows from the fact that trace is
independent of choice of orthonormal basis.
\end{proof}
\end{lemma}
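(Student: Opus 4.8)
The plan is to exploit the naturality of every differential-geometric ingredient under the isometric $G$-action, so that the group merely permutes the vector fields $V_u$ among themselves while relabelling the frame. The first and most basic step is to record how $G$ acts on the family $F$. Since $g\in G$ is a linear isometry of $\mathbb{V}$ carrying $M$ to itself, its differential along $M$ is the restriction of $g$, and because $g$ preserves the orthogonal splitting $\mathbb{V}=T_{x}M\oplus N_{x}M$ it commutes with orthogonal projection onto the tangent spaces. Hence $g_{\ast}V_{u}=V_{gu}$, equivalently $g_{\ast}^{-1}V_{u}=V_{g^{-1}u}$; in particular $F$ is $G$-invariant and $u\mapsto V_{u}$ intertwines the $G$-action on $\mathbb{V}$ with that on $F$.

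The second step is the equivariance of the curvature-type operators. Because the metric on $M$ is the restriction of the $G$-invariant inner product, the Levi-Civita connection satisfies $\nabla_{g_{\ast}X}(g_{\ast}V)=g_{\ast}(\nabla_{X}V)$. Feeding this into the definitions of $\mathcal{A}_{V}$ and $\mathcal{A}_{V,W}$ and extending to $p$-frames by the Leibniz rule yields $\mathcal{A}_{V}(g\cdot\xi)=g\cdot(\mathcal{A}_{g_{\ast}^{-1}V}\xi)$ and the analogous identity for $\mathcal{A}_{V,W}$. Substituting these into the second variation formula \eqref{sec_var}, whose terms are built entirely from inner products of the $\mathcal{A}$-operators that $g$ preserves, gives $\mathcal{Q}_{g\cdot\xi}(V_{u})=\mathcal{Q}_{\xi}(g_{\ast}^{-1}V_{u})=\mathcal{Q}_{\xi}(V_{g^{-1}u})$.

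The final step is to sum. Interpreting $\mathrm{tr}\,\mathcal{Q}_{\xi}|_{F}$ as the trace of the quadratic form $u\mapsto\mathcal{Q}_{\xi}(V_{u})$ pulled back to $\mathbb{V}$, I would evaluate it on an orthonormal basis $\{u_{i}\}$ of $\mathbb{V}$, so that
\[
\mathrm{tr}\,\mathcal{Q}_{g\cdot\xi}|_{F}=\sum_{i}\mathcal{Q}_{g\cdot\xi}(V_{u_{i}})=\sum_{i}\mathcal{Q}_{\xi}(V_{g^{-1}u_{i}})=\mathrm{tr}\,\mathcal{Q}_{\xi}|_{F},
\]
where the last equality holds because $g^{-1}$, being an isometry of $\mathbb{V}$, sends the orthonormal basis $\{u_{i}\}$ to another orthonormal basis, and a trace is independent of this choice.

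I expect the point requiring most care to be the very first step, namely verifying $g_{\ast}V_{u}=V_{gu}$. This rests on two facts that both must be checked: that for a linear isometry preserving the submanifold the intrinsic differential $g_{\ast}\colon T_{x}M\to T_{gx}M$ coincides with the ambient linear map $g$, and that $g$ commutes with orthogonal projection onto $TM$ (so that the projected field is genuinely carried to the projected field). Once this permutation property is in hand, together with the standard equivariance of the Levi-Civita connection, the remainder is a formal substitution, and the basis-independence of the trace closes the argument cleanly.
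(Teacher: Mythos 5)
Your proposal is correct and follows essentially the same route as the paper's proof: equivariance of the Levi-Civita connection gives equivariance of $\mathcal{A}_{V}$ and $\mathcal{A}_{V,W}$, which yields $\mathcal{Q}_{g\cdot\xi}(V_{u})=\mathcal{Q}_{\xi}(V_{g^{-1}u})$, and summing over an orthonormal basis of $\mathbb{V}$ finishes by basis-independence of the trace. The only difference is that you spell out the intertwining identity $g_{\ast}V_{u}=V_{gu}$ (via $g$ commuting with orthogonal projection onto $T M$), a step the paper compresses into the phrase ``due to $G$-invariance of metric''; this is a welcome clarification, not a deviation.
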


By the above lemma, where we take $M=\mathbb{AP}^{n},\mathbb{V=}
S_{n+1}^{\prime}\left(  \mathbb{A}\right)  $ and $G=H_{\mathbb{A}}\left(
n+1\right)  $, it suffices to consider average second variation of a $p$-frame
$\xi=e_{1}\wedge\ldots\wedge e_{p}$ at a particular point $x\in\mathbb{AP}
^{n}$, because $p$-frames at another point can be moved to $x$ by some $g\in
H_{\mathbb{A}}\left(  n+1\right)  $. Let's fix $x=\mathbb{E}_{n+1,n+1}
\in\mathbb{AP}^{n}$, which is the matrix with value $1$ at the $(n+1,n+1)$
position and all other entries zero.

We shall need the following formula, whose proof is given in the appendix:

\begin{theorem}
\label{secvarorbit} Assume that $M=G/K\subset$ $\mathbb{V}$ is a compact
symmetric space which is a $G$-orbit of an orthogonal representation
$\mathbb{V}$ of $G$. The projection of each $u\in\mathbb{V}$ gives a vector
field $V_{u}$ on $M$. The average second variation of an oriented orthonormal
$p$-frame $\xi=e_{1}\wedge\ldots\wedge e_{p}$ at $x\in M$ under all such
vector fields is given by
\[
\mathrm{tr}\,\mathcal{Q}_{\xi}=\sum_{j,k=1}^{p,q}\left(  2\,\Vert
\,\mathrm{I}\mathrm{I}(e_{j},n_{k})\Vert^{2}-\left\langle \,\mathrm{I}
\mathrm{I}(e_{j},e_{j}),\,\mathrm{I}\mathrm{I}(n_{k},n_{k})\right\rangle
\right)
\]
\newline where $\mathrm{I}\mathrm{I}$ is the second fundamental form of
$M\subset\mathbb{V}$ at $x$, and $\{e_{j}\}_{j=1}^{p}\cup\{n_{k}\}_{k=1}^{q}$
is an orthonormal basis of $TM$.
\end{theorem}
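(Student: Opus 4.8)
The plan is to feed the first- and second-order infinitesimal data of the vector fields $V_u$ into the second variation formula (\ref{sec_var}) and then carry out the sum over an orthonormal basis of $\mathbb{V}$ adapted to the splitting $\mathbb{V}=T_xM\oplus N_xM$. The essential structural point is that each $V_u$ is the tangential projection $u^{\top}$ of the constant (hence $D$-parallel) field $u$ in the ambient flat space $\mathbb{V}$, so its covariant behaviour on $M$ is governed entirely by the second fundamental form. Writing $u=u^{\top}+u^{\perp}$ and using the Gauss and Weingarten formulas, I would first show that at $x$ the endomorphism $\mathcal{A}_{V_u}=\nabla_{\,\cdot\,}V_u$ equals the shape operator $A_{u^{\perp}}$, i.e. $\langle\mathcal{A}_{V_u}X,Y\rangle=\langle\mathrm{I}\mathrm{I}(X,Y),u^{\perp}\rangle$. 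This turns the two first-order terms of (\ref{sec_var}) into $\big(\sum_j\langle\mathrm{I}\mathrm{I}(e_j,e_j),u^{\perp}\rangle\big)^2$ and $2\sum_{j,k}\langle\mathrm{I}\mathrm{I}(e_j,n_k),u^{\perp}\rangle^2$.

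Next I would observe that $\mathcal{Q}_\xi(V_u)$ is a quadratic form in $u\in\mathbb{V}$ (since (\ref{sec_var}) is quadratic in $V$ and $u\mapsto V_u$ is linear), so the trace $\sum_u\mathcal{Q}_\xi(V_u)$ is independent of the chosen orthonormal basis; I would compute it in a basis consisting of an orthonormal basis $\{E_\alpha\}=\{e_j\}\cup\{n_k\}$ of $T_xM$ together with one of $N_xM$. For the two first-order terms only the normal basis vectors contribute, and expanding $u^{\perp}$ in the normal basis gives $\big\Vert\sum_j\mathrm{I}\mathrm{I}(e_j,e_j)\big\Vert^2$ and $2\sum_{j,k}\Vert\mathrm{I}\mathrm{I}(e_j,n_k)\Vert^2$ respectively.

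The main work, and the expected obstacle, is the second-order term $\sum_j\langle\mathcal{A}_{V_u,V_u}e_j,e_j\rangle$. Choosing a local extension $\tilde X$ of $X=e_j$ with $\nabla\tilde X|_x=0$ reduces $\langle\mathcal{A}_{V_u,V_u}X,X\rangle$ to $V_u\langle\mathrm{I}\mathrm{I}(\tilde X,\tilde X),u\rangle$ at $x$. Differentiating this $\mathbb{V}$-valued function with the Weingarten formula produces exactly two pieces: a shape-operator piece $-\langle\mathrm{I}\mathrm{I}(V_u,V_u),\mathrm{I}\mathrm{I}(X,X)\rangle$ and a derivative-of-$\mathrm{I}\mathrm{I}$ piece $\langle(\bar\nabla_{V_u}\mathrm{I}\mathrm{I})(X,X),u^{\perp}\rangle$. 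The delicate point is that the second piece must not survive: for the symmetric orbit $M=G/K$ the second fundamental form is parallel ($\bar\nabla\mathrm{I}\mathrm{I}=0$, the geodesic symmetry at $x$ forcing the tensor $\bar\nabla\mathrm{I}\mathrm{I}$, odd in its three tangential slots, to vanish), and in any case this cross term pairs $V_u=u^{\top}$ against $u^{\perp}$ and hence contributes nothing after tracing over the adapted basis. With that piece gone, summing the surviving term over the tangential basis vectors yields $-\sum_j\langle H,\mathrm{I}\mathrm{I}(e_j,e_j)\rangle$, where $H=\sum_\alpha\mathrm{I}\mathrm{I}(E_\alpha,E_\alpha)$.

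Finally I would add the three contributions. The term $\big\Vert\sum_j\mathrm{I}\mathrm{I}(e_j,e_j)\big\Vert^2$ from the first first-order term cancels precisely the $\sum_{j,j'}\langle\mathrm{I}\mathrm{I}(e_j,e_j),\mathrm{I}\mathrm{I}(e_{j'},e_{j'})\rangle$ part of $-\sum_j\langle H,\mathrm{I}\mathrm{I}(e_j,e_j)\rangle$, leaving $-\sum_{j,k}\langle\mathrm{I}\mathrm{I}(e_j,e_j),\mathrm{I}\mathrm{I}(n_k,n_k)\rangle$; together with $2\sum_{j,k}\Vert\mathrm{I}\mathrm{I}(e_j,n_k)\Vert^2$ this is exactly the claimed formula. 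I expect the bookkeeping of the second-order derivative, and the justification that the $\bar\nabla\mathrm{I}\mathrm{I}$ term drops out, to be the only genuinely non-routine steps.
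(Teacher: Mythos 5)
Your proposal is correct, and it reaches the theorem by a genuinely different route from the paper. The paper argues Lie-theoretically: it first writes $V_u$ as a combination of Killing vector fields (Lemma \ref{exp for V}), then evaluates the three terms of \eqref{sec_var} at the base point using the symmetric-space identity $\nabla_{K_1}K_2=\tfrac12[K_1,K_2]_M$, the Cartan relations \eqref{t_and_p}, and Killing-field identities, recognizing the surviving flat derivatives $\partial_{e_j}e_j$, $\partial_{e_j}n_k$ as second fundamental forms. You work purely extrinsically: $V_u=u^{\top}$ is the gradient of the height function $\langle u,\cdot\rangle|_M$ (which is also what legitimizes applying \eqref{sec_var}, stated for gradient fields), $\mathcal{A}_{V_u}=A_{u^{\perp}}$ by Weingarten, and the second-order term splits as $-\langle\mathrm{II}(u^{\top},u^{\top}),\mathrm{II}(e_j,e_j)\rangle+\langle(\bar{\nabla}_{u^{\top}}\mathrm{II})(e_j,e_j),u^{\perp}\rangle$. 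Your observation that the $\bar{\nabla}\,\mathrm{II}$ piece is bilinear in $(u^{\top},u^{\perp})$, hence killed term by term when the trace is computed in a basis adapted to $T_xM\oplus N_xM$ (legitimate because $u\mapsto\mathcal{Q}_{\xi}(V_u)$ is a quadratic form on $\mathbb{V}$, so the trace is basis-independent), replaces all of the paper's bracket manipulations; your three resulting terms, $\Vert\sum_j\mathrm{II}(e_j,e_j)\Vert^2$, $2\sum_{j,k}\Vert\mathrm{II}(e_j,n_k)\Vert^2$ and $-\sum_j\langle H,\mathrm{II}(e_j,e_j)\rangle$ with $H$ the unnormalized mean curvature vector, coincide with the paper's, and the same cancellation finishes. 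What your route buys is generality and economy: it never invokes the orbit or symmetric-space hypotheses, so it actually proves the formula for an arbitrary compact submanifold $M\subset\mathbb{V}$, in the spirit of the original Lawson--Simons computation, whereas the paper's proof stays inside the homogeneous framework it uses elsewhere (e.g.\ Lemma \ref{inv of secvar under G}). One caution: your parenthetical claim that $\bar{\nabla}\,\mathrm{II}=0$ for a symmetric orbit is not justified at this level of generality --- the intrinsic geodesic symmetry of $G/K$ need not extend to an ambient isometry preserving $M$, and an intrinsically symmetric orbit need not be a parallel (extrinsically symmetric) submanifold --- but this is harmless, since the adapted-basis argument you offer as the fallback is the one that actually carries the proof.
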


With the above formula, it remains to compute the second fundamental form of
$\mathbb{AP}^{n}$. Let's take the following coordinates around $x$:
\begin{align*}
\mathbb{A}^{n}  &  \rightarrow\mathbb{A}\mathbb{P}^{n}\subset S_{n+1}^{\prime
}(\mathbb{A})\\
Q  &  \mapsto\frac{1}{1+\Vert Q\Vert^{2}}\left(
\begin{array}
[c]{c}%
Q\\
1
\end{array}
\right)  \left(
\begin{array}
[c]{cc}%
Q^{\ast} & 1
\end{array}
\right)
\end{align*}

Here we adopt the following notations:
\[
Q=\sum_{l=0}^{\Lambda}\mathbf{i}_{l}X_{l}
\]
where $X_{l}$ are column $n$-vectors, $\mathbf{i}_{0}:=1$, and for $1\leq
l\leq\Lambda$, $\mathbf{i}_{l}$ are the linearly independent imaginary square
roots of unity in $\mathbb{A}$. Recall that for the case $\mathbb{A}
=\mathbb{R}^{m}$, $n=1$, $\Lambda=0$, $Q=X_{0}$ is an element in
$\mathbb{R}^{m}$ with $Q^{\ast}=Q$ and $Q\cdot Q:=\left\langle
Q,Q\right\rangle $. For the other four cases, the entries of $X_{l}$ are real numbers.

The basis of coordinate tangent vector fields is $\{\frac{\partial}{\partial
x^{j}_{l}}: 0 \leq l \leq\Lambda, 1 \leq j \leq N\}$, where $\frac{\partial
}{\partial x^{j}_{l}}$ denote the $\mathbf{i}_{l}$-directions. $N = m$ in the
case of $\mathbb{A} = \mathbb{R}^{m}$, and $N = n$ for all the other four
cases. Using product rule (which is valid for multiplication in $\mathbb{A}$),%

\begin{align*}
\left.  \frac{\partial}{\partial x_{l}^{j}}\right\vert _{Q}  &  =\frac
{1}{1+\Vert Q\Vert^{2}}\left(
\begin{array}
[c]{c}%
\mathbf{i}_{l}w_{j}\\
0
\end{array}
\right)  \left(
\begin{array}
[c]{cc}%
Q^{\ast} & 1
\end{array}
\right) \\
&  +\frac{1}{1+\Vert Q\Vert^{2}}\left(
\begin{array}
[c]{c}%
Q\\
1
\end{array}
\right)  \left(
\begin{array}
[c]{cc}%
\overline{\mathbf{i}_{l}}w_{j}^{T} & 0
\end{array}
\right) \\
&  -\frac{2X_{l}^{T}w_{j}}{(1+\Vert Q\Vert)^{2}}\left(
\begin{array}
[c]{c}%
Q\\
1
\end{array}
\right)  \left(
\begin{array}
[c]{cc}%
Q^{\ast} & 1
\end{array}
\right)
\end{align*}
where $w_{j}$ stands for the column $n$-vector with $j$-th coordinate $1$ and
other coordinates zero, and $T$ stands for transpose. Recall that when
$\mathbb{A}=\mathbb{R}^{m}$, $n$ equals $1$, and so transpose of an element is
just itself. Differentiating both sides along $\frac{\partial}{\partial
x_{r}^{k}}$ at $0\in\mathbb{A}^{n}$,
\begin{align*}
&  \left.  \frac{\partial}{\partial x_{r}^{k}}\right\vert _{0}\left(
\frac{\partial}{\partial x_{l}^{j}}\right) \\
&  =\left\{
\begin{array}
[c]{ll}%
\left(
\begin{array}
[c]{cc}%
2\delta_{jk} & 0\\
0 & -2\delta_{jk}%
\end{array}
\right)  & \text{ for $\mathbb{A}=\mathbb{R}^{m}$}\\
& \\
\left(
\begin{array}
[c]{cc}%
\mathbf{i}_{r}\overline{\mathbf{i}_{l}}\mathbb{E}_{kj}+\mathbf{i}_{l}
\overline{\mathbf{i}_{r}}\mathbb{E}_{jk} & 0\\
0 & -(\mathbf{i}_{r}\overline{\mathbf{i}_{l}}+\mathbf{i}_{l}\overline
{\mathbf{i}_{r}})\delta_{jk}%
\end{array}
\right)  & \text{ for }\mathbb{A}=\mathbb{R},\mathbb{C},\mathbb{H},\mathbb{O}%
\end{array}
\right.
\end{align*}
which is already perpendicular to $T_{x}\mathbb{A}\mathbb{P}^{n}$, because
\[
\left.  \frac{\partial}{\partial x_{l}^{j}}\right\vert _{0}=\left(
\begin{array}
[c]{cc}%
0 & \mathbf{i}_{l}w_{j}\\
\overline{\mathbf{i}_{l}}w_{j}^{T} & 0
\end{array}
\right)  \text{.}
\]
Under the metric $\left\langle A,B\right\rangle =2\,\mathrm{Re}\,\mathrm{tr}
\,(AB)$, our coordinate vectors are pairwise orthogonal, each has length $2$.
We scale them to get an orthonormal basis $\{\frac{1}{2}\frac{\partial
}{\partial x_{l}^{j}}:1\leq j\leq n,0\leq l\leq\Lambda\}$.

We conclude that

\begin{lemma}
\label{secform}The second fundamental form $\mathrm{I}\mathrm{I}(\frac{1}
{2}\frac{\partial}{\partial x_{l}^{j}},\frac{1}{2}\frac{\partial}{\partial
x_{r}^{k}})$ of $\mathbb{AP}^{n}\subset S_{n+1}^{\prime}(\mathbb{A})$ at $x$
is given by
\[
\left\{
\begin{array}
[c]{ll}%
\frac{1}{2}\left(
\begin{array}
[c]{cc}%
\delta_{jk} & 0\\
0 & -\delta_{jk}%
\end{array}
\right)  & \text{ for $\mathbb{A}=\mathbb{R}^{m}$}\\
\frac{1}{4}\left(
\begin{array}
[c]{cc}%
\mathbf{i}_{r}\overline{\mathbf{i}_{l}}\mathbb{E}_{kj}+\mathbf{i}_{l}
\overline{\mathbf{i}_{r}}\mathbb{E}_{jk} & 0\\
0 & -(\mathbf{i}_{r}\overline{\mathbf{i}_{l}}+\mathbf{i}_{l}\overline
{\mathbf{i}_{r}})\delta_{jk}%
\end{array}
\right)  & \text{ for }\mathbb{A}\in\{\mathbb{R},\mathbb{C},\mathbb{H}
,\mathbb{O}\}\text{.}%
\end{array}
\right.
\]

\end{lemma}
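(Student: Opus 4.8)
The plan is to exploit the fact that the ambient space $\mathbb{V}=S_{n+1}^{\prime}(\mathbb{A})$ carries the \emph{constant} inner product $\langle A,B\rangle=2\,\mathrm{Re}\,\mathrm{tr}(AB)$ and is therefore flat: its Levi-Civita connection $\bar\nabla$ is nothing but componentwise differentiation of $\mathbb{V}$-valued functions. By the Gauss formula, for tangent fields $X,Y$ on $\mathbb{AP}^n$ one has $\mathrm{II}(X,Y)=(\bar\nabla_X Y)^\perp$, the component of the ambient derivative lying in the normal space $T_x^{\perp}\mathbb{AP}^n$. Hence it suffices to take the explicit parametrization $Q\mapsto \frac{1}{1+\Vert Q\Vert^2}\binom{Q}{1}\!\left(Q^{\ast}\ 1\right)$, differentiate the coordinate vector field $\frac{\partial}{\partial x_l^j}$ (regarded as the $\mathbb{V}$-valued function $\partial\Phi/\partial x_l^j$) along $\frac{\partial}{\partial x_r^k}$ at $Q=0$, and then project onto $T_x^{\perp}\mathbb{AP}^n$.

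First I would carry out the differentiation. Starting from the three-term expression already displayed for $\frac{\partial}{\partial x_l^j}\big|_Q$, I differentiate term by term using the product rule (valid for the $\mathbb{A}$-multiplication in each entry) and then set $Q=0$, where $\Vert Q\Vert^2$ and all the $X_l$ vanish. The first two terms contribute the upper-left block, while the dilation term $-\frac{2X_l^T w_j}{(1+\Vert Q\Vert)^2}\binom{Q}{1}\!\left(Q^{\ast}\ 1\right)$ contributes, through $\partial_{x_r^k}(X_l^T w_j)=\delta_{lr}\delta_{jk}$, the lower-right entry $-2\delta_{lr}\delta_{jk}\mathbb{E}_{n+1,n+1}$. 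These assemble into the block-diagonal matrices recorded in the excerpt, namely $\mathrm{diag}(2\delta_{jk},-2\delta_{jk})$ in the spin-factor case and $\bigl(\mathbf{i}_r\overline{\mathbf{i}_l}\mathbb{E}_{kj}+\mathbf{i}_l\overline{\mathbf{i}_r}\mathbb{E}_{jk}\bigr)\oplus\bigl(-(\mathbf{i}_r\overline{\mathbf{i}_l}+\mathbf{i}_l\overline{\mathbf{i}_r})\delta_{jk}\bigr)$ in the division-algebra case. That these are symmetric under $(l,j)\leftrightarrow(r,k)$ is a useful consistency check, since $\mathrm{II}$ must be symmetric.

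The normal projection is then immediate from the block structure. The tangent space $T_x\mathbb{AP}^n$ is spanned by the bordered matrices $\frac{\partial}{\partial x_l^j}\big|_0=\left(\begin{smallmatrix}0 & \mathbf{i}_l w_j\\ \overline{\mathbf{i}_l}w_j^T & 0\end{smallmatrix}\right)$, whose diagonal blocks vanish, whereas every second derivative computed above is block-diagonal. Since $\mathrm{tr}(AB)=0$ whenever $A$ is block-off-diagonal and $B$ is block-diagonal, the two families are orthogonal under $\langle\,,\,\rangle$; hence each second derivative already lies in $T_x^{\perp}\mathbb{AP}^n$ and equals its own projection. This identifies $\mathrm{II}(\frac{\partial}{\partial x_l^j},\frac{\partial}{\partial x_r^k})$ with the computed matrix. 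Finally I pass to the orthonormal frame: each coordinate vector has length $2$, so the orthonormal vectors are $\frac12\frac{\partial}{\partial x_l^j}$, and bilinearity of $\mathrm{II}$ supplies the factor $\frac14$, which collapses to $\frac12$ after cancelling the $2$'s in the spin-factor case, yielding the stated formula.

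The one genuinely delicate point is the nonassociativity of $\mathbb{O}$: I must make sure that the parametrization really takes values in the exceptional Jordan algebra $S_3(\mathbb{O})$ and that the product rule, the matrix products $\binom{Q}{1}\!\left(Q^{\ast}\ 1\right)$, and the diagonal/off-diagonal orthogonality computation all remain valid without associativity. This is precisely what forces the restriction $n\le 2$ and requires verifying that only products of imaginary units with single matrix entries occur, so that the associativity actually needed holds inside the relevant sub-division-algebra of $\mathbb{O}$. Granting this bookkeeping, the computation runs uniformly across all five columns and the lemma follows.
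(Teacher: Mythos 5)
Your proposal is correct and is essentially the paper's own proof: both arguments differentiate the explicit parametrization twice at $Q=0$, observe that the resulting block-diagonal matrices are automatically orthogonal (under $\langle A,B\rangle=2\,\mathrm{Re}\,\mathrm{tr}(AB)$) to the block-off-diagonal tangent vectors $\left.\frac{\partial}{\partial x_{l}^{j}}\right\vert_{0}$, hence already lie in the normal space and coincide with $\mathrm{I}\mathrm{I}$ by the Gauss formula in the flat ambient space $S_{n+1}^{\prime}(\mathbb{A})$, and then rescale to the orthonormal frame $\frac{1}{2}\frac{\partial}{\partial x_{l}^{j}}$ to produce the factor $\frac{1}{4}$ (which becomes $\frac{1}{2}$ in the $\mathbb{A}=\mathbb{R}^{m}$ case). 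Your additional remarks---the symmetry of $\mathrm{I}\mathrm{I}$ as a consistency check, and the observation that only binary products of octonions occur so that alternativity suffices---are sound refinements but do not alter the argument, which matches the paper's computation step for step.
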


Now we are ready to compute $\mathrm{tr}\,\mathcal{Q}_{\xi}$ for an
orthonormal $p$-frame \newline$\xi=e_{1}\wedge\ldots\wedge e_{p}$ at
$x\in\mathbb{A}\mathbb{P}^{n}$. Complete $B=\{e_{j}\}_{j=1}^{p}$ to an
orthonormal basis $\{e_{j},n_{k}\}$ in the form
\[
\left\{
\begin{array}
[c]{cccc}%
v_{1}, & \mathbb{J}_{1}v_{1}, & \ldots & \mathbb{J}_{\Lambda}v_{1}\\
\vdots & \vdots &  & \vdots\\
v_{N}, & \mathbb{J}_{1}v_{N}, & \ldots & \mathbb{J}_{\Lambda}v_{N}%
\end{array}
\right\}
\]
where $\mathbb{J}_{l}:T_{x}\mathbb{A}\mathbb{P}^{n}\rightarrow T_{x}
\mathbb{A}\mathbb{P}^{n}$ is the differential of left multiplication of
$\mathbf{i}_{l}$ on $\mathbb{A}^{n}\subset\mathbb{A}\mathbb{P}^{n}$.

Such an orthonormal basis can be brought to the basis of normalized coordinate
vectors by the action of the isotropy group $K<G$. This is easy for
$\mathbb{R}\mathbb{P}^{n}$, $\mathbb{C}\mathbb{P}^{n}$ and $\mathbb{H}%
\mathbb{P}^{n}$: $SO(n)$, $SU(n)$ and $Sp(n)$ acts transitively on orthonormal
frames, unitary frames and quaternionic unitary frames respectively. For
$\mathbb{O}\mathbb{P}^{2}$, $K=\mathrm{Spin}(9)<\mathrm{F}_{4}$, we argue as
follows: $T_{x}\mathbb{O}\mathbb{P}^{2}$ is the spinor representation of
$\mathrm{Spin}(9)$. Under this action
\[
T_{x}\mathbb{O}\mathbb{P}^{2}\supset\mathbb{S}^{15}\cong\mathrm{Spin}%
(9)/\mathrm{Spin}(7)
\]
(see P.283 of \cite{Harvey_spinors}). Hence we can use $\sigma\in
\mathrm{Spin}(9)$ to bring $\frac{1}{2}\frac{\partial}{\partial x_{0}^{1}}$ to
$v_{1}$. $\mathrm{Spin}(7)$ fixes $v_{1}$ and hence acts on $T_{v_{1}%
}\mathbb{S}^{15}$, which splits into the vector representation $V_{7}$ and the
spinor representation of $\mathrm{Spin}(7)$. $\left\{  \sigma\left(  \frac
{1}{2}\frac{\partial}{\partial x_{l}^{1}}\right)  \right\}  _{l=1}^{7}$ and
$\{\mathbb{J}_{l}v_{1}\}_{l=1}^{7}$ form two bases of $V_{7}$ having the same
orientation. Then we can bring $\left\{  \sigma\left(  \frac{1}{2}%
\frac{\partial}{\partial x_{l}^{1}}\right)  \right\}  _{l=1}^{7}$ to
$\{\mathbb{J}_{l}v_{1}\}_{l=1}^{7}$ by an element in $\mathrm{Spin(7)}$.
$\left\{  \sigma\left(  \frac{1}{2}\frac{\partial}{\partial x_{l}^{2}}\right)
\right\}  _{l=1}^{7}$ can be brought to $\{\mathbb{J}_{l}v_{2}\}_{l=0}^{7}$ by
$\mathrm{Spin(7)}$ using similar reasoning, because
\[
\mathrm{Spin(7)}/G_{2}\cong\mathbb{S}^{7}\text{ and }\mathrm{G_{2}%
}/\mathrm{SU}(3)\cong\mathbb{S}^{6}%
\]
and $\mathrm{SU}(3)$ acts transitively on the collection of unitary bases of
$\mathbb{C}^{3}$.

By Lemma \ref{inv of secvar under G}, average second variations of $\xi$ and
$g\cdot$ $\xi$ are the same for all $g\in G$, and hence we may assume
\[
\mathbb{J}_{l}v_{j}=\frac{1}{2}\frac{\partial}{\partial x_{l}^{j}}
\]
so that we can apply Lemma \ref{secform} directly.

For the case $\mathbb{A}=\mathbb{R}^{m}$ in which $\mathbb{A}\mathbb{P}
^{1}=\mathbb{S}^{m}$, Lemma \ref{secform} gives
\[
\left\Vert \,\mathrm{I}\mathrm{I}(\frac{1}{2}\frac{\partial}{\partial x^{j}
},\frac{1}{2}\frac{\partial}{\partial x^{k}})\right\Vert ^{2}=\delta_{jk}
\]
which is the usual formula for the second fundamental form of $\mathbb{S}
^{m}\subset\mathbb{R}^{m+1}$. Together with Theorem \ref{secvarorbit}, the
result of Lawson and Simons \cite{Lawson_stable} is reproduced:
\[
\mathrm{tr}\,\mathcal{Q}_{\xi}=\sum_{j,k=1}^{p,q}(-1)=-pq\leq0
\]
where $p+q=m$, implying that the average second variation of a rectifiable
current of non-zero volume in $\mathbb{S}^{n}$ is negative for $0<p<m$, and
hence cannot be stable.

Now let's turn to the other four cases. Lemma \ref{secform} gives
\[
\Vert\,\mathrm{I}\mathrm{I}(e_{j},n_{k})\Vert^{2}=\left\{
\begin{array}
[c]{ll}%
0 & \text{for }n_{k}=\pm\mathbb{J}_{l}e_{j}\text{ for some $1\leq l\leq
\Lambda$}\\
\frac{1}{4} & \text{otherwise}%
\end{array}
\right.
\]
and
\[
\left\langle \,\mathrm{I}\mathrm{I}(e_{j},e_{j}),\,\mathrm{I}\mathrm{I}
(n_{k},n_{k})\right\rangle =\left\{
\begin{array}
[c]{ll}%
1 & \text{for }n_{k}=\pm\mathbb{J}_{l}e_{j}\text{ for some $1\leq l\leq
\Lambda$}\\
\frac{1}{2} & \text{otherwise}%
\end{array}
\right.
\]
so the summand appeared in Theorem \ref{secvarorbit} is
\[
2\Vert\,\mathrm{I}\mathrm{I}(e_{j},n_{k})\Vert^{2}-\left\langle \,\mathrm{I}
\mathrm{I}(e_{j},e_{j}),\,\mathrm{I}\mathrm{I}(n_{k},n_{k})\right\rangle
=\left\{
\begin{array}
[c]{ll}%
-1 & \text{for }n_{k}=\pm\mathbb{J}_{l}e_{j}\text{ for some $1\leq
l\leq\Lambda$}\\
0 & \text{otherwise}%
\end{array}
\right.
\]
meaning that for each $e_{j}$, every $\mathbb{J}_{l}e_{j}$-direction normal to
$\xi$ contributes $-1$ to $\mathrm{tr}\,\mathcal{Q}_{\xi}$, and all other
normal directions have no effect. Hence
\begin{align*}
\mathrm{tr}\,\mathcal{Q}_{\xi}  &  =-\sum_{j=1}^{p}\,(\text{number of $l$ such
that $\pm\mathbb{J}_{l}e_{j}\not \in B$})\\
&  =-\sum_{j=1}^{p}\sum_{l=1}^{\Lambda}\Vert e_{1}\wedge\ldots\wedge
\mathbb{J}_{l}e_{j}\wedge\ldots\wedge e_{p}\Vert^{2}\\
&  =-\sum_{l=1}^{\Lambda}\Vert\mathbb{J}_{l}\cdot\xi\Vert^{2}\leq0\text{.}%
\end{align*}
(Here $\mathbb{J}$ acts on $\xi$ by Leibniz rule.) Equality holds if and only
if $\Vert\mathbb{J}_{l}\cdot\xi\Vert^{2}=0$ for all $1\leq l\leq\Lambda$,
meaning that $\xi$ is invariant under each $\mathbb{J}_{l}$, and hence
invariant under the $\mathbb{S}^{\Lambda-1}$-family of complex structures.
\ Hence we obtain the following theorem:

\begin{theorem}
\label{main}In $\mathbb{AP}^{n}$, where $\mathbb{A\in}\left\{  \mathbb{R}%
,\mathbb{C},\mathbb{H},\mathbb{O},\mathbb{R}^{m}\right\}  $, any stable
minimal submanifold $S$ (or more generally rectifiable current) must be
complex, by which we means $T_{x}S$ is invariant under all the linear complex
structures at $x$ for almost every $x\in S$.
\end{theorem}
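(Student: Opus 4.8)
The plan is to run an averaging argument in the spirit of Lawson and Simons, exploiting the finite-dimensional family of conformal vector fields $\{V_u : u\in S'_{n+1}(\mathbb{A})\}$ furnished by the non-compact symmetry group $N_{\mathbb{A}}(n+1)$. The one crucial input from stability is a one-sided bound on the trace: recall (exactly as in the sphere argument already given) that a current whose averaged second variation is strictly negative admits a volume-decreasing deformation and so cannot be stable; equivalently, writing $\mathrm{tr}\,\mathcal{Q}_S|_F=\sum_u\mathcal{Q}_S(V_u)$ over an orthonormal basis of $F=S'_{n+1}(\mathbb{A})$, stability of $S$ yields
\[
\mathrm{tr}\,\mathcal{Q}_S|_F\ \geq\ 0 .
\]
The whole strategy is then to compute this same trace by an independent geometric route, show it is $\leq 0$ with strict inequality precisely where $T_xS$ fails to be complex, and pinch the two estimates together.

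For the geometric side I would first pass from the global to a pointwise statement, writing
\[
\mathrm{tr}\,\mathcal{Q}_S|_F=\int_M \mathrm{tr}\,\mathcal{Q}_{S_x}|_F\,\mathrm{d}\nu_S(x),
\]
and at each $x$ move the tangent frame to the fixed base point $\mathbb{E}_{n+1,n+1}$ using the homogeneity of $\mathbb{AP}^n$ together with Lemma \ref{inv of secvar under G}. At the base point Theorem \ref{secvarorbit} rewrites $\mathrm{tr}\,\mathcal{Q}_{S_x}$ entirely in terms of the second fundamental form of $\mathbb{AP}^n\subset S'_{n+1}(\mathbb{A})$, which Lemma \ref{secform} supplies explicitly. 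Substituting, the summand $2\Vert\mathrm{I}\mathrm{I}(e_j,n_k)\Vert^2-\langle\mathrm{I}\mathrm{I}(e_j,e_j),\mathrm{I}\mathrm{I}(n_k,n_k)\rangle$ collapses to $-1$ exactly when $n_k=\pm\mathbb{J}_l e_j$ for some imaginary unit $\mathbf{i}_l$ and to $0$ otherwise, leaving the clean pointwise identity $\mathrm{tr}\,\mathcal{Q}_{S_x}=-\sum_{l=1}^{\Lambda}\Vert\mathbb{J}_l\cdot S_x\Vert^2$ (respectively $-pq$ in the spin-factor case $\mathbb{A}=\mathbb{R}^m$), each term manifestly non-positive.

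Combining $\int_M(-\sum_l\Vert\mathbb{J}_l\cdot S_x\Vert^2)\,\mathrm{d}\nu_S=\mathrm{tr}\,\mathcal{Q}_S|_F\geq 0$ with the pointwise non-positivity forces $\sum_l\Vert\mathbb{J}_l\cdot S_x\Vert^2=0$ for $\nu_S$-almost every $x$, so $S_x$ is fixed by every $\mathbb{J}_l$ and hence invariant under the entire $\mathbb{S}^{\Lambda-1}$-family of complex structures; in the sphere case it instead forces $pq=0$, so no stable current of intermediate dimension can exist. I expect the genuine obstacle to lie not in this concluding squeeze but in making the homogeneity reduction rigorous for the exceptional space $\mathbb{OP}^2$, where the isotropy group $K=\mathrm{Spin}(9)$ is \emph{not} transitive on orthonormal $p$-frames: one must use the identification of $T_x\mathbb{OP}^2$ with the spinor representation and the chain $\mathrm{Spin}(9)/\mathrm{Spin}(7)$, $\mathrm{Spin}(7)/G_2$, $G_2/SU(3)$ of sphere fibrations to rotate an arbitrary frame into the coordinate basis of Lemma \ref{secform}, checking that orientations agree so that Lemma \ref{inv of secvar under G} genuinely applies. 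A secondary technical point is validating the integral second-variation formula and the pointwise reduction at the level of general rectifiable currents rather than smooth submanifolds.
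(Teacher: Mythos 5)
Your proposal is correct and takes essentially the same route as the paper: restrict the trace of $\mathcal{Q}$ to $F=S_{n+1}'(\mathbb{A})$, move everything to the base point $\mathbb{E}_{n+1,n+1}$ by homogeneity and Lemma \ref{inv of secvar under G}, evaluate via Theorem \ref{secvarorbit} together with Lemma \ref{secform} to get the pointwise identity $\mathrm{tr}\,\mathcal{Q}_{\xi}=-\sum_{l=1}^{\Lambda}\Vert\mathbb{J}_{l}\cdot\xi\Vert^{2}$ (resp.\ $-pq$ for $\mathbb{S}^{m}$), and squeeze this non-positive quantity against the non-negativity that stability forces on the averaged second variation. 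The two technical points you single out are exactly the ones the paper attends to: the frame rotation in $\mathbb{OP}^{2}$ is carried out through the chain $\mathrm{Spin}(9)/\mathrm{Spin}(7)$, $\mathrm{Spin}(7)/G_{2}$, $G_{2}/SU(3)$ precisely as you anticipate.
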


We remark that in $\mathbb{H}\mathbb{P}^{n}$, a quaternionic submanifold must
be totally geodesic.

\section{Appendix: Average second variation in symmetric orbits}

Our aim is to prove the following theorem, which we have used in the last
section to compute the average second variation of the volume of a cycle in
\noindent$\mathbb{A}\mathbb{P}^{n}$ along directions in $\mathfrak{h}
_{\mathbb{A}}\left(  n+1\right)  $:

\vspace{3mm}

\noindent \textbf{Theorem: }{\it Assume that $M=G/K$ is a compact symmetric space which is a $G$-orbit of an
orthogonal representation $\mathbb{V}$ of $G$. The projection of each
$u\in\mathbb{V}$ determines a vector field $V_{u}$, or simply $V$, on $M$. The
average second variation of an oriented orthonormal $p$-frame $\xi=e_{1}%
\wedge\ldots\wedge e_{p}$ at $x\in M$ under all such vector fields is given
by
\[
\mathrm{tr}\,\mathcal{Q}_{\xi}=\sum_{j,k=1}^{p,q}\left(  2\,\Vert
\,\mathrm{I}\mathrm{I}(e_{j},n_{k})\Vert^{2}-\left\langle \,\mathrm{I}%
\mathrm{I}(e_{j},e_{j}),\,\mathrm{I}\mathrm{I}(n_{k},n_{k})\right\rangle
\right)
\]
where $\,\mathrm{I}\mathrm{I}$ is the second fundamental form of
$M\subset\mathbb{V}$ at $x$, and $\{e_{j}\}_{j=1}^{p}\cup\{n_{k}\}_{k=1}^{q}$
is an orthonormal basis of $T_{x}M$.}

\vspace{3mm}

The method of proof is similar to \cite{Lawson_stable}. The Lie algebra
$\mathfrak{g}$ of $G$ decomposes:
\[
\mathfrak{g}=\mathfrak{k}\oplus\mathfrak{m}
\]
where $\mathfrak{m}:=\mathfrak{k}^{\perp}$. On $G$ we have a natural
$G$-invariant metric given by negative of the Killing form, which can be
scaled such that $\mathfrak{m}$ is isometric to $T_{x}M$. We shall use the
same symbol to denote an element of $\mathfrak{g}$, its induced vector field
on $\mathbb{V}$, and the restricted Killing vector field on $M$. Recall that
\begin{equation}
\lbrack g_{1},g_{2}]_{M}=-[g_{1},g_{2}] \label{bracket}%
\end{equation}
where $[\cdot,\cdot]_{M}$ is the Lie bracket for vector fields on $M$, and
$[\cdot,\cdot]$ is the Lie bracket on $\mathfrak{g}$. On the right hand side
$g_{1},g_{2}$ denote elements in $\mathfrak{g}$, while on the left hand side
they denote their induced Killing vector fields on $M$.

Let's complete $\xi=e_{1}\wedge\ldots\wedge e_{p}$ to an orthonormal basis
$\{e_{1},\ldots,e_{p},n_{1},\ldots,n_{q}\}$ of $T_{x}M\cong\mathfrak{m}$, and
further take an orthonormal basis $\{\beta_{1},\ldots,\beta_{r}\}$ of
$\mathfrak{k}$, so that $\{\beta_{1},\ldots,\beta_{r},e_{1},\ldots,e_{p}
,n_{1},\ldots,n_{q}\}$ forms an orthonormal basis of $\mathfrak{g}$.

We now express the projection $V=V_{u}$ of $u\in\mathbb{V}$ in terms of
Killing vector fields induced by $\mathfrak{g}$ on $M$.

\begin{lemma}
\label{exp for V}
\[
V=\sum_{\mu=1}^{r}\left\langle u,\beta_{\mu}\right\rangle \beta_{\mu}
+\sum_{\nu=1}^{p}\left\langle u,e_{\nu}\right\rangle e_{\nu}+\sum_{\gamma
=1}^{q}\left\langle u,n_{\gamma}\right\rangle n_{\gamma}\text{.}
\]

\end{lemma}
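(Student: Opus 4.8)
The plan is to read the asserted equation as an identity of vector fields on all of $M$, and to recognize its right-hand side as the orthogonal projection of the constant field $u$ onto $TM$. By definition $V_u$ is exactly this tangential projection, so $V_u(y)=\pi_{T_yM}(u)$ for every $y\in M$, where $\pi_{T_yM}\colon\mathbb{V}\to T_yM$ denotes orthogonal projection. Writing $b(y)$ for the value at $y$ of the Killing field induced by $b\in\mathfrak{g}$, each $b(y)=\frac{d}{dt}\big|_{0}\bigl(\exp(tb)\cdot y\bigr)$ is tangent to the orbit $M$, so the right-hand side evaluated at $y$ is $P(y)u:=\sum_{b}\langle u,b(y)\rangle\,b(y)$, where $b$ ranges over the orthonormal basis $\{\beta_\mu\}\cup\{e_\nu\}\cup\{n_\gamma\}$ of $\mathfrak{g}$. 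Thus it suffices to prove that the self-adjoint \emph{frame operator} $P(y)=\sum_{b}b(y)\otimes b(y)$ equals $\pi_{T_yM}$ for all $y$; since each $b(y)\in T_yM$, the operator $P(y)$ already annihilates $(T_yM)^{\perp}$ and has image in $T_yM$, so the content is that $P(y)$ restricts to the identity on $T_yM$.

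First I would verify this at the base point $x$. As $\{\beta_\mu\}$ spans the isotropy algebra $\mathfrak{k}$ fixing $x$, the corresponding Killing fields vanish there, i.e. $\beta_\mu(x)=0$, and those terms drop out. The remaining vectors $\{e_\nu(x)\}\cup\{n_\gamma(x)\}$ are the images of the orthonormal basis $\{e_\nu\}\cup\{n_\gamma\}$ of $\mathfrak{m}$ under the canonical map $\mathfrak{m}\xrightarrow{\ \sim\ }T_xM$, $b\mapsto b(x)$, which is an isometry after the normalization of the Killing form stipulated in the setup. Hence $\{e_\nu(x),n_\gamma(x)\}$ is an orthonormal basis of $T_xM$, and the standard identity $\sum_{w}w\otimes w=\pi_{W}$ for an orthonormal basis of a subspace $W$ gives $P(x)=\sum_{b\in\mathfrak{m}}b(x)\otimes b(x)=\pi_{T_xM}$. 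So the claim holds at $x$, and the $\beta_\mu$-terms may be appended there for free.

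Next I would propagate the identity from $x$ to an arbitrary $y=g\cdot x$ by $G$-equivariance. Since $G$ acts on $\mathbb{V}$ orthogonally and the Killing form is $\mathrm{Ad}$-invariant, $\{\mathrm{Ad}_g b\}$ is again an orthonormal basis of $\mathfrak{g}$, and the intertwining relation $(\mathrm{Ad}_g b)(g\cdot x)=g\cdot\bigl(b(x)\bigr)$ lets me reindex the defining sum and obtain $P(g\cdot x)=g\,P(x)\,g^{-1}$, conjugation by the orthogonal transformation $g$. Combined with $g(T_xM)=T_{g\cdot x}M$ and $g\,\pi_{T_xM}\,g^{-1}=\pi_{g(T_xM)}$, this yields $P(y)=\pi_{T_yM}$ for every $y\in M$, and therefore $\sum_{b}\langle u,b\rangle\,b=\pi_{T_\bullet M}(u)=V_u$, as claimed.

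The step I would be most careful about is not any single calculation but the bookkeeping of identifications: one must confirm that the isomorphism $\mathfrak{m}\to T_xM$ declaring $\{e_\nu,n_\gamma\}$ orthonormal is the very map $b\mapsto b(x)$ that produces the frame values, and that the chosen scaling of the negative Killing form makes this map an isometry, so that the frame constant is exactly $1$ rather than some other scalar (this is the role of the normalization assumed in the preamble). Everything else — tangency of $b(y)$ to the orbit, vanishing of the $\mathfrak{k}$-fields at $x$, and the conjugation-equivariance of $P$ — is formal. I would also stress that it is the global vector-field identity, not merely its value at $x$, that is needed in the sequel: the proof of Theorem~\ref{secvarorbit} differentiates $V$ twice at $x$ via $\mathcal{A}_V$ and $\mathcal{A}_{V,V}$, and the $\beta_\mu$-terms, though zero at $x$, have nonzero covariant derivatives there and must be retained.
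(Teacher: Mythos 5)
Your proof is correct and follows essentially the same route as the paper: both arguments rest on the observation that at the base point the isotropy fields $\beta_\mu$ vanish while the $\mathfrak{m}$-fields give an orthonormal frame of $T_xM$, combined with the invariance of the frame sum $\sum_{b}\left\langle u,b(y)\right\rangle b(y)$ under an orthogonal change of orthonormal basis of $\mathfrak{g}$. The only difference is organizational — the paper re-chooses an adapted orthonormal basis at each point $y$ and compares it to the fixed one via an orthogonal matrix $T$, whereas you transport the adapted basis at $x$ by $\mathrm{Ad}_g$ and use conjugation-equivariance of the projection; your version has the minor merit of making explicit why an adapted basis exists at every point of the orbit, which the paper leaves implicit.
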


\begin{proof}
Denote the basis $\{\beta_{1},\ldots,\beta_{r},e_{1},\ldots,e_{p},n_{1}
,\ldots,n_{q}\}$ of $\mathfrak{g}$ by $A$.

At $x\in M$ the above equation is obvious, because $\beta_{\mu}(x)=0$,\newline
and $\{e_{1},\ldots,e_{p},n_{1},\ldots,n_{q}\}$ forms an orthonormal basis of
$T_{x}M$.

At another point $y\in M$, let $\{\tilde{e_{1}},\ldots,\tilde{e_{p}}%
,\tilde{n_{1}},\ldots,\tilde{n_{q}}\}$ be an orthonormal basis of $T_{y}%
M\cong\mathfrak{m}$, and we complete it to an orthonormal basis
\[
B=\{\tilde{\beta}_{1},\ldots,\tilde{\beta}_{r},\tilde{e}_{1},\ldots,\tilde
{e}_{p},\tilde{n}_{1},\ldots,\tilde{n}_{q}\}
\]
of $\mathfrak{g}$. Both $A,B$ are orthonormal basis of $\mathfrak{g}$, so
$B=AT$, where $T$ is an orthogonal matrix.
\[
V(x)=\sum_{j}\left\langle u,B_{j}\right\rangle B_{j}=\sum_{j}\left\langle
u,A_{k}T_{j}^{k}\right\rangle A_{i}T_{j}^{i}=\sum_{j}\left\langle
u,A_{j}\right\rangle A_{j}%
\]
since $\sum_{j}T_{j}^{k}T_{j}^{i}=\delta^{ki}$.
\end{proof}

\bigskip

\noindent\textbf{Proof to Theorem \ref{secvarorbit}:} From the second
variation formula \eqref{sec_var}, the average second variation is given by
\[
\mathrm{tr}\,\mathcal{Q}_{\xi}=\sum_{u}\left(  \sum_{j=1}^{p}\left\langle
\mathcal{A}_{V}e_{j},e_{j}\right\rangle \right)  ^{2}+2\sum_{u}\sum
_{j=1,k=1}^{p,q}\left(  \left\langle \mathcal{A}_{V}e_{j},n_{k}\right\rangle
\right)  ^{2}+\sum_{u}\sum_{j=1}^{p}\left\langle \mathcal{A}_{V,V}e_{j}%
,e_{j}\right\rangle
\]
where $u$ runs through an orthonormal basis of $\mathbb{V}$, each gives a
vector field $V=V_{u}$ on $M$ by projection. We compute term by term for the
three terms appeared in the above expression.

Recall \cite{Helgason_symmetric} that for a symmetric space,
\[
\nabla_{K_{1}}K_{2}=\frac{1}{2}\,[K_{1},K_{2}]_{M}
\]
for Killing vector fields $K_{1}$ and $K_{2}$ on $M$. Applying this to the
expression of $V$ given in Lemma \ref{exp for V},
\begin{align}
\nabla_{e_{j}}V  &  =\left\langle u,\partial_{e_{j}}\beta_{\mu}\right\rangle
\beta_{\mu}+\frac{1}{2}\,\left\langle u,\beta_{\mu}\right\rangle [e_{j}
,\beta_{\mu}]_{M}+\left\langle u,\partial_{e_{j}}e_{\nu}\right\rangle e_{\nu
}+\frac{1}{2}\,\left\langle u,e_{\nu}\right\rangle [e_{j},e_{\nu}
]_{M}\nonumber\label{dV}\\
&  +\left\langle u,\partial_{e_{j}}n_{\gamma}\right\rangle n_{\gamma}+\frac
{1}{2}\,\left\langle u,n_{\gamma}\right\rangle [e_{j},n_{\gamma}]_{M}%
\end{align}
where $\partial$ is the trivial connection of $\mathbb{V}$, and so
$\partial_{v}$ is the usual directional derivative along $v\in T_{x}
\mathbb{V}\cong\mathbb{V}$. (Recall that $\beta_{\mu}$, $e_{\nu}$, $n_{\gamma
}$ can be regarded as vector fields on $\mathbb{V}$, and so the above
directional derivatives make sense.)

To simplify the above expression at $x$, notice that $\mathfrak{k}$ induces
zero vectors at $x$, and hence $\beta_{\mu}\in\mathfrak{k}$ vanishes at $x $.
Together with equation \eqref{bracket} and the fact that
\begin{equation}
\lbrack\mathfrak{k},\mathfrak{k}]\subset\mathfrak{k},[\mathfrak{k}
,\mathfrak{m}]\subset\mathfrak{m},[\mathfrak{m},\mathfrak{m}]\subset
\mathfrak{k} \label{t_and_p}%
\end{equation}
we have
\[
\nabla_{e_{j}}V(x)=\left\langle u,\partial_{e_{j}}e_{\nu}\right\rangle e_{\nu
}+\left\langle u,\partial_{e_{j}}n_{\gamma}\right\rangle n_{\gamma}
\]
and hence
\[%
\begin{array}
[c]{lllll}%
\left\langle \mathcal{A}_{V}e_{j},e_{j}\right\rangle  & = & \left\langle
\nabla_{e_{j}}V(x),e_{j}\right\rangle  & = & \left\langle u,\partial_{e_{j}
}e_{j}\right\rangle \text{;}\\
\left\langle \mathcal{A}_{V}e_{j},n_{k}\right\rangle  & = & \left\langle
\nabla_{e_{j}}V(x),n_{k}\right\rangle  & = & \left\langle u,\partial_{e_{j}
}n_{k}\right\rangle \text{.}%
\end{array}
\]

The first term $\sum_{u}\left(  \sum_{j=1}^{p}\left\langle \mathcal{A}
_{V}e_{j},e_{j}\right\rangle \right)  ^{2}$ is
\begin{align*}
\sum_{u}\left(  \sum_{j=1}^{p}\left\langle \mathcal{A}_{V}e_{j},e_{j}
\right\rangle \right)  ^{2}  &  =\sum_{u}\sum_{j,k=1}^{p}\left\langle
u,\partial_{e_{j}}e_{j}\right\rangle \left\langle u,\partial_{e_{k}}
e_{k}\right\rangle \\
&  =\sum_{j,k=1}^{p}\left\langle \partial_{e_{j}}e_{j},\partial_{e_{k}}
e_{k}\right\rangle \\
&  =\left\Vert \sum_{j=1}^{p}\,\mathrm{I}\mathrm{I}(e_{j},e_{j})\right\Vert
^{2}%
\end{align*}
where $\partial_{e_{j}}e_{j}=\,\mathrm{I}\mathrm{I}(e_{j},e_{j})$ because
$\nabla_{e_{j}}e_{j}=[e_{j},e_{j}]_{M}/2=0$.

The second term $2\sum_{u}\sum_{j=1,k=1}^{p,q}\left(  \left\langle
\mathcal{A}_{V}e_{j},n_{k}\right\rangle \right)  ^{2}$ is
\begin{align*}
2\sum_{u}\sum_{j=1,k=1}^{p,q}\left(  \left\langle \mathcal{A}_{V}e_{j}
,n_{k}\right\rangle \right)  ^{2}  &  =2\sum_{u}\sum_{j=1,k=1}^{p,q}\left(
\left\langle u,\partial_{e_{j}}n_{k}\right\rangle \right)  ^{2}\\
&  =2\sum_{j,k=1}^{p,q}\Vert\partial_{e_{j}}n_{k}\Vert^{2}\\
&  =2\sum_{j,k=1}^{p,q}\Vert\,\mathrm{I}\mathrm{I}(e_{j},n_{k})\Vert^{2}%
\end{align*}
where $\partial_{e_{j}}n_{k}=\,\mathrm{I}\mathrm{I}(e_{j},n_{k})$ at $x$
because $\nabla_{e_{j}}n_{k}(x)=[e_{j},n_{k}]_{M}/2=0$.

Now we turn to compute the third term $\sum_{u}\sum_{j=1}^{p}\left\langle
\mathcal{A}_{V,V}e_{j},e_{j}\right\rangle $, which is more complicated. At $x
$,
\begin{align*}
\left\langle \mathcal{A}_{V,V}e_{j},e_{j}\right\rangle  &  =\left\langle
\nabla_{V}\nabla_{e_{j}}V-\nabla_{\nabla_{V}e_{j}}V,e_{j}\right\rangle \\
&  =\left\langle \nabla_{V}\nabla_{e_{j}}V,e_{j}\right\rangle \\
&  =\sum_{\nu=1}^{p}\left\langle u,e_{\nu}\right\rangle \left\langle
\nabla_{e_{\nu}}\nabla_{e_{j}}V,e_{j}\right\rangle +\sum_{\gamma=1}
^{q}\left\langle u,n_{\gamma}\right\rangle \left\langle \nabla_{n_{\gamma}
}\nabla_{e_{j}}V,e_{j}\right\rangle
\end{align*}
where $\nabla_{\nabla_{V}e_{j}}V(x)=0$ because
\[
\nabla_{V}e_{j}(x)=\sum_{\nu=1}^{p}\left\langle u,e_{\nu}\right\rangle
\frac{[e_{\nu},e_{j}]_{M}}{2}+\sum_{\gamma=1}^{q}\left\langle u,n_{\gamma
}\right\rangle \frac{[n_{\gamma},e_{j}]_{M}}{2}=0\text{.}
\]

We now compute the first part $\sum\left\langle u,e_{\nu}\right\rangle
\left\langle \nabla_{e_{\nu}}\nabla_{e_{j}}V,e_{j}\right\rangle $ of the third
term. Differentiating equation \eqref{dV} along $e_{\nu}$, we get
\begin{align*}
\nabla_{e_{\nu}}\nabla_{e_{j}}V(x)  &  =\frac{1}{2}\,\left\langle
u,\partial_{e_{j}}\beta_{\mu}\right\rangle [e_{\nu},\beta_{\mu}]_{M}+\frac
{1}{2}\,\left\langle u,\partial_{e_{\nu}}\beta_{\mu}\right\rangle [e_{j}
,\beta_{\mu}]_{M}\\
&  +\left\langle u,\partial_{e_{\nu}}\partial_{e_{j}}e_{\alpha}\right\rangle
e_{\alpha}+\frac{1}{4}\,\left\langle u,e_{\alpha}\right\rangle [e_{\nu}
,[e_{j},e_{\alpha}]_{M}]_{M}\\
&  +\left\langle u,\partial_{e_{\nu}}\partial_{e_{j}}n_{\gamma}\right\rangle
n_{\gamma}+\frac{1}{4}\,\left\langle u,n_{\gamma}\right\rangle [e_{\nu}
,[e_{j},n_{\gamma}]_{M}]_{M}\text{.}%
\end{align*}

Using the identity $\left\langle [X,Y]_{M},Z\right\rangle =-\left\langle
Y,[X,Z]_{M}\right\rangle $ for Killing vector fields $X,Y,Z$, together with
the relation \eqref{t_and_p} repeatedly, we get
\[
\left\langle \nabla_{e_{\nu}}\nabla_{e_{j}}V(x),e_{j}\right\rangle
=\left\langle u,\partial_{e_{\nu}}\partial_{e_{j}}e_{j}\right\rangle
\]
and so
\begin{align}
&  \sum_{u}\sum_{j=1}^{p}\sum_{\nu=1}^{p}\left\langle u,e_{\nu}\right\rangle
\left\langle \nabla_{e_{\nu}}\nabla_{e_{j}}V,e_{j}\right\rangle
\nonumber\label{part1}\\
&  =\sum_{u}\sum_{j=1}^{p}\sum_{\nu=1}^{p}\left\langle u,e_{\nu}\right\rangle
\left\langle u,\partial_{e_{\nu}}\partial_{e_{j}}e_{j}\right\rangle
\nonumber\\
&  =\sum_{j,\nu=1}^{p}\left\langle \partial_{e_{\nu}}\partial_{e_{j}}
e_{j},e_{\nu}\right\rangle \nonumber\\
&  =-\left\Vert \sum_{j=1}^{p}\,\mathrm{I}\mathrm{I}(e_{j},e_{j})\right\Vert
^{2}\text{.}%
\end{align}

Now proceed to compute the second part $\sum\left\langle u,n_{\gamma
}\right\rangle \left\langle \nabla_{n_{\gamma}}\nabla_{e_{j}}V,e_{j}
\right\rangle $ of the third term. Differentiating the equation \eqref{dV}
along $n_{\gamma}$, we get
\begin{align*}
\nabla_{n_{\gamma}}\nabla_{e_{j}}V(x)  &  =\frac{1}{2}\,\left\langle
u,\partial_{e_{j}}\beta_{\mu}\right\rangle [n_{\gamma},\beta_{\mu}]_{M}
+\frac{1}{2}\,\left\langle u,\partial_{n_{\gamma}}\beta_{\mu}\right\rangle
[e_{j},\beta_{\mu}]_{M}\\
&  +\left\langle u,\partial_{n_{\gamma}}\partial_{e_{j}}e_{\nu}\right\rangle
e_{\nu}+\frac{1}{4}\,\left\langle u,e_{\nu}\right\rangle [n_{\gamma}
,[e_{j},e_{\nu}]_{M}]_{M}\\
&  +\left\langle u,\partial_{n_{\gamma}}\partial_{e_{j}}n_{\alpha
}\right\rangle n_{\alpha}+\frac{1}{4}\,\left\langle u,n_{\alpha}\right\rangle
[n_{\gamma},[e_{j},n_{\alpha}]_{M}]_{M}%
\end{align*}
and so
\[
\left\langle \nabla_{n_{\gamma}}\nabla_{e_{j}}V(x),e_{j}\right\rangle
=\left\langle u,\partial_{n_{\gamma}}\partial_{e_{j}}e_{j}\right\rangle
\text{.}
\]

\begin{align}
&  \sum_{u}\sum_{j=1}^{p}\sum_{\gamma=1}^{q}\left\langle u,n_{\gamma
}\right\rangle \left\langle \nabla_{n_{\gamma}}\nabla_{e_{j}}V,e_{j}
\right\rangle \nonumber\label{part2}\\
&  =\sum_{j,\gamma=1}^{p,q}\left\langle \partial_{n_{\gamma}}\partial_{e_{j}
}e_{j},n_{\gamma}\right\rangle \nonumber\\
&  =-\sum_{j,\gamma=1}^{p,q}\left\langle \,\mathrm{I}\mathrm{I}(e_{j}
,e_{j}),\,\mathrm{I}\mathrm{I}(n_{\gamma},n_{\gamma})\right\rangle \text{.}%
\end{align}

Adding up equations \eqref{part1} and \eqref{part2}, we get the third term
\[
-\left\Vert \sum_{j=1}^{p}\,\mathrm{I}\mathrm{I}(e_{j},e_{j})\right\Vert
^{2}-\sum_{j,\gamma=1}^{p,q}\left\langle \,\mathrm{I}\mathrm{I}(e_{j}
,e_{j}),\,\mathrm{I}\mathrm{I}(n_{\gamma},n_{\gamma})\right\rangle \text{.}
\]

Adding up all the three terms, the average second variation is
\[
\sum_{j,k=1}^{p,q}\left(  2\,\Vert\,\mathrm{I}\mathrm{I}(e_{j},n_{k})\Vert
^{2}-\left\langle \,\mathrm{I}\mathrm{I}(e_{j},e_{j}),\,\mathrm{I}
\mathrm{I}(n_{k},n_{k})\right\rangle \right)  \text{.\ \rule{5pt}{5pt}}
\]

\begin{quote}
\textbf{Acknowledgement:} The second author is partially supported by an RGC
grant from the Hong Kong Government.
\end{quote}

\noindent

\bibliographystyle{amsplain}
\bibliography{geometry}

\end{document}